\numberwithin{equation}{section}
\theoremstyle{plain}
\newtheorem{thm}{Theorem}[section]
\newtheorem{lem}[thm]{Lemma}
\newtheorem{cor}[thm]{Corollary}
\theoremstyle{remark}
\newtheorem{rem}[thm]{Remark}
\theoremstyle{definition}
\newtheorem{ex}[thm]{Example}
\newcommand{\adet}[1][\alpha]{\operatorname{det}_{#1}}
\DeclareMathOperator{\Tr}{Tr}
\DeclareMathOperator{\wrdet}{wrdet}
\DeclareMathOperator{\per}{per}
\DeclareMathOperator{\sgn}{sgn}
\DeclareMathOperator{\Imm}{Imm}
\DeclareMathOperator{\STab}{STab}
\newcommand{\sym}[1]{\mathfrak{S}_{#1}}
\newcommand{\1}{\bm{1}}
\newcommand{\C}{\mathbb{C}}
\newcommand{\va}{\bm{a}}
\newcommand{\ve}{\bm{e}}
\newcommand{\Smod}[1]{\bm{S}^{#1}}
\newcommand{\I}{\mathbb{I}}
\newcommand{\gind}[2]{\left|#1:#2\right|}
\newcommand{\len}[1]{l(#1)}
\newcommand{\mat}[1]{\begin{pmatrix}#1\end{pmatrix}}
\newcommand{\tr}[1]{\mathord{\mathopen{{\vphantom{#1}}^t}\!#1}}
\newcommand{\set}[2]{\left\{#1\,\middle\vert\,#2\right\}}
\newcommand{\Grand}[2]{\thanks{Partially supported by Grand-in-Aid for Scientific Research (#1) No. #2.}}
\begin{document}

\title{\sffamily Averages of alpha-determinants over permutations}
\author{Kazufumi KIMOTO\Grand{C}{25400044}}
\date{March 15, 2014}
\pagestyle{myheadings}
\markboth{K.~KIMOTO}{Averages of alpha-determinants over permutations}

\maketitle

\begin{abstract}
We show that certain weighted average of the $\alpha$-determinant of a $kn$ by $kn$ matrix of the form $A\otimes\1_{1,k}$,
the Kronecker product of a $kn$ by $n$ matrix $A$ and $1$ by $k$ all one matrix $\1_{1,k}$,
over permutations of $kn$ letters is reduced to the $k$-wreath determinant of $A$ up to constant.
The constant is exactly given by the modified content polynomial
for the Young diagram $(k^n)$.
As a corollary, we give a `determinantal' formula for certain functions on the symmetric groups
which are invariant under the left and right translation by a Young subgroup,
especially the values of the Kostka numbers for rectangular shapes with arbitrary weight.
This corollary gives a generalization of the formula of irreducible characters of the symmetric group
for rectangular shapes due to Stanley.
\end{abstract}

\section{Introduction}

The \emph{$\alpha$-determinant} of an $N$ by $N$ square matrix $A=(a_{ij})$
is defined as a parametric deformation of the usual determinant as
\begin{equation*}
\adet A:=\sum_{\sigma\in\sym N}\alpha^{\nu(\sigma)}a_{\sigma(1)1}a_{\sigma(2)2}\dots a_{\sigma(N)N},
\end{equation*}
where $\alpha$ is a complex parameter and $\nu(\sigma)$ for a permutation $\sigma\in\sym N$ is defined to be
$N$ minus the number of disjoint cycles in $\sigma$.
By definition, we see that
\begin{equation*}
\adet[-1]A=\det A,\qquad
\adet[1]A=\per A,\qquad
\adet[0]A=a_{11}a_{22}\dots a_{NN},
\end{equation*}
where $\per A$ is the \emph{permanent} of $A$.
It is Vere-Jones \cite{VJ1988} who first introduce such a parametric deformations,
which he called the \emph{$\alpha$-permanent}.
Here we adopt the modified definition and terminology by Shirai and Takahashi \cite{ST2003}.
The $\alpha$-determinant is multiplicative only if $\alpha=-1$.

Let $P(\sigma)=(\delta_{i\sigma(j)})$ be the permutation matrix for a permutation $\sigma\in\sym N$.
The sum
\begin{equation}\label{eq:unsigned average of adet}
\sum_{\sigma\in\sym k}\adet\bigl(AP(\sigma)\bigr)
\end{equation}
is a polynomial in $\alpha$ which is divisible by $(1+\alpha)\dots(1+(k-1)\alpha)$
for a given $N$ by $N$ matrix $A$.
Here we regard $\sym k$ as a subgroup of $\sym N$
consisting of permutations which do not move the $N-k$ letters $k+1,k+2,\dots,N$.
This fact is used to show that
the $\alpha$-determinant is weakly alternating
when $\alpha$ is a reciprocal of a negative integer
in the sense that
$\adet[-1/k]A$ vanishes whenever more than $k$ columns or rows in $A$ are equal (Lemma \ref{lem:weak alternating}).
Based on this fact,
we define the \emph{$k$-wreath determinant} $\wrdet_kA$ of a $kn\times n$ matrix $A$ by
\begin{equation*}
\wrdet_kA:=\adet[-1/k]
\bigl(\overbrace{\va_1,\dots,\va_1}^k,\dots,\overbrace{\va_n,\dots,\va_n}^k\bigr),
\end{equation*}
where $\va_j$ is the $j$-th column vector of $A$.
This \emph{recovers} the relative invariance
\begin{equation*}
\wrdet_k(AQ)=\wrdet_kA\,(\det Q)^k
\end{equation*}
with respect to the right translation by any $n$ by $n$ matrix $Q$ \cite{KW2008}.

In the extremal case where $k=N$, we can determine the sum \eqref{eq:unsigned average of adet} explicitly as
$$
\sum_{\sigma\in\sym N} \adet\bigl(AP(\sigma)\bigr)
=\prod_{i=1}^{N-1}(1+i\alpha)\cdot\per A.
$$
More generally, one can prove
\begin{equation}\label{eq:character weighted average of adet}
\sum_{\sigma\in\sym N} \chi^\lambda(\sigma)\adet\bigl(AP(\sigma)\bigr)
=f^\lambda f_\lambda(\alpha)\Imm_\lambda A
\end{equation}
for any partition $\lambda\vdash N$.
Especially we have
\begin{equation}\label{eq:signed average of adet}
\sum_{\sigma\in\sym N} \sgn\sigma\;\adet\bigl(AP(\sigma)\bigr)
=\prod_{i=1}^{N-1}(1-i\alpha)\cdot\det A.
\end{equation}
Here $\chi^\lambda$ is the irreducible character of $\sym N$ associated to $\lambda$,
$f^\lambda$ is the number of standard tableaux with shape $\lambda$,
$f_\lambda(\alpha)$ is the modified content polynomial for $\lambda$
and $\Imm_\lambda A$ is the immanant of $A$ associated to $\lambda$.
The identity \eqref{eq:character weighted average of adet}
is essentially equivalent to the result by Matsumoto and Wakayama \cite{MW2006}
on the irreducible decomposition of the $U(\mathfrak{gl}_N)$-cyclic submodule
generated by a single polynomial $\adet X$.
The structure of such cyclic module is the same for almost all values of $\alpha$,
but changes drastically when $\alpha$ is a reciprocal of a nonzero integer.

The purpose of the paper is to give an analog of \eqref{eq:signed average of adet}
for the $k$-wreath determinant (Theorem \ref{thm:main_result}).
As corollaries of the main result,
we also obtain a formula for certain $\sym\mu$-biinvariant functions on $\sym N$,
where $\sym\mu$ is the Young subgroup of $\sym N$ associated with a partition $\mu\vdash N$.
In particular, we get a formula for Kostka numbers with rectangular shape and arbitrary weight
(Corollaries \ref{cor:values_of_omega}, \ref{cor:values_of_zsf}).
These corollaries give a generalization of the formula
for irreducible characters of the symmetric groups associated to rectangular diagrams
which is due to Stanley \cite{S2003} (Corollary \ref{cor:values_of_chi}).

\section{Weighted averages of alpha-determinants over permutations}\label{sec:main_result}

Let $n,k$ be positive integers.
We define a linear map
$\varpi_k\colon M_{kn,n}\to M_{kn}$
by $\varpi_k(A):=A\otimes\1_{1,k}$,
where $M_{p,q}$ is the set of $p$ by $q$ complex matrices,
$M_p=M_{p,p}$ is the set of square matrices of size $p$,
$\1_{p,q}$ is the $p$ by $q$ all-one matrix,
and $\otimes$ denotes the Kronecker product of matrices
\begin{equation*}
A\otimes B=\begin{pmatrix}
a_{11}B & \dots & a_{1n}B \\ \vdots & \ddots & \vdots \\ a_{m1}B & \dots & a_{mn}B
\end{pmatrix}\qquad
(A=(a_{ij})\in M_{m,n}).
\end{equation*}
We note that $\varpi_k$ commutes with the left translation,
that is, $\varpi_k(PA)=P\,\varpi_k(A)$ for any $P\in M_{kn}$ and $A\in M_{kn,n}$.
We also notice that
\begin{equation}\label{eq:equivariance}
\varpi_k(A)P(g)=\varpi_k(A)
\end{equation}
for any $A\in M_{kn,n}$ and $g\in\sym k^n=\sym{(k^n)}$.
For a $kn$ by $n$ matrix $A\in M_{kn,n}$, the \emph{$k$-wreath determinant} of $A$ is defined by
\begin{equation}
\wrdet_kA:=\adet[-1/k]\varpi_k(A).
\end{equation}
The $1$-wreath determinant is the ordinary determinant: $\wrdet_1A=\det A$.
See \cite{KW2008} for basic facts on the wreath determinants.

\begin{ex}[$n=k=2$]
For $A=(a_{ij})\in M_{4,2}$, the $2$-wreath determinant of $A$ is
\begin{align*}
\wrdet_2A
&=\adet[-1/2]\mat{a_{11} & a_{11} & a_{12} & a_{12} \\
a_{21} & a_{21} & a_{22} & a_{22} \\
a_{31} & a_{31} & a_{32} & a_{32} \\
a_{41} & a_{41} & a_{42} & a_{42}} \\
&=\frac14\Bigl\{a_{11}a_{21}a_{32}a_{42}+a_{12}a_{22}a_{31}a_{41}\Bigr\} \\
&\qquad\qquad{}-\frac18\Bigl\{a_{11}a_{22}a_{31}a_{42}+a_{11}a_{22}a_{32}a_{41}
+a_{12}a_{21}a_{31}a_{42}+a_{12}a_{21}a_{32}a_{41}\Bigr\}.
\end{align*}
We can express $\wrdet_2A$ as a sum of products of minor determinants of $A$ as
\begin{equation*}
\wrdet_2A=\frac18\begin{vmatrix}
a_{11} & a_{12} \\ a_{31} & a_{32}
\end{vmatrix}\begin{vmatrix}
a_{21} & a_{22} \\ a_{41} & a_{42}
\end{vmatrix}
+\frac18\begin{vmatrix}
a_{11} & a_{12} \\ a_{41} & a_{42}
\end{vmatrix}\begin{vmatrix}
a_{21} & a_{22} \\ a_{31} & a_{32}
\end{vmatrix},
\end{equation*}
which apparently shows the relative invariance $\wrdet_2(AQ)=\wrdet_2A\,(\det Q)^2$ for $Q\in M_2$.
\end{ex}

For a partition $\lambda$,
$f_\lambda(\alpha)$ is the \emph{modified content polynomial} for $\lambda$
\begin{equation*}
f_\lambda(x)=\prod_{(i,j)\in\lambda}(1+(j-i)x),
\end{equation*}
where we identify $\lambda$ with its corresponding Young diagram.
For instance, we have
\begin{equation*}
f_{(N)}(x)=\prod_{i=1}^{N-1}(1+ix),\qquad
f_{(1^N)}(x)=\prod_{i=1}^{N-1}(1-ix)
\end{equation*}
for any positive integer $N$.
It is notable that
\begin{equation}\label{eq:adet of all-one matrix}
\adet\1_N=\sum_{\sigma\in\sym N}\alpha^{\nu(\sigma)}=f_{(N)}(\alpha),
\end{equation}
where $\1_N=\1_{N,N}$.

Our goal is to prove the
\begin{thm}\label{thm:main_result}
For each positive integer $k$, the equality
\begin{equation}\label{eq:average_formula}
\sum_{\sigma\in\sym{kn}}\Bigl(-\frac1k\Bigr)^{\nu(\sigma)}\adet\bigl(\varpi_k(A)P(\sigma)\bigr)
=f_{(k^n)}(\alpha)\wrdet_kA
\end{equation}
holds.
\end{thm}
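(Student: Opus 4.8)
The plan is to recast the left-hand side $S$ of \eqref{eq:average_formula} as an identity inside the group algebra $\C[\sym{kn}]$, and then to isolate the rectangular diagram $(k^n)$ by playing the zeros of $f_\mu(-1/k)$ against the support of a subgroup idempotent. Write $N=kn$, $B=(b_{ij})=\varpi_k(A)$, set $I_p=\{k(p-1)+1,\dots,kp\}$ for $1\le p\le n$, let $H=\sym{(k^n)}\le\sym N$ be the Young subgroup stabilising each $I_p$ setwise, and let $e_H=|H|^{-1}\sum_{h\in H}h$. For a complex parameter $x$ put $z_x:=\sum_{\sigma\in\sym N}x^{\nu(\sigma)}\sigma\in\C[\sym N]$; here and below $w(g)$ denotes the coefficient of $g$ in $w\in\C[\sym N]$. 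I will use the classical fact that $z_x$ is central (because $\nu$ is a class function) and acts on the irreducible module $\Smod\lambda$ by the scalar $f_\lambda(x)$; equivalently, $z_x=\sum_{\lambda\vdash N}f_\lambda(x)\,e_\lambda$, where $e_\lambda=\tfrac{f^\lambda}{N!}\sum_{g\in\sym N}\chi^\lambda(g)g$ is the central idempotent of $\Smod\lambda$. (The $\lambda=(N)$ instance is \eqref{eq:adet of all-one matrix}; in general this is the $A=I_N$ case of \eqref{eq:character weighted average of adet} together with $\adet P(\sigma)=\alpha^{\nu(\sigma)}$, or follows from $z_x=\prod_{i=2}^{N}(1+x\,X_i)$ for the commuting Jucys--Murphy elements $X_i=\sum_{j<i}(j\;i)$, whose box contents give the eigenvalues.)

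First I would reindex the $\alpha$-determinant: for an $N\times N$ matrix $C$ and $g\in\sym N$, expanding $\adet(CP(g))$ and writing the running permutation $\tau$ as $\tau=\rho g$ gives $\adet(CP(g))=\sum_{\rho\in\sym N}\alpha^{\nu(\rho g)}\prod_{j}C_{\rho(j)j}$. Taking $C=B$, multiplying by $(-1/k)^{\nu(g)}$ and summing over $g$, the resulting double sum factors, since $\sum_{\sigma\in\sym N}\beta^{\nu(\sigma)}\alpha^{\nu(\rho\sigma)}$ is precisely the coefficient of $\rho$ in the central element $z_\alpha z_\beta$ (using $\nu(w)=\nu(w^{-1})$):
\[
S=\sum_{\rho\in\sym N}\Bigl(\prod_{j=1}^{N}b_{\rho(j)j}\Bigr)\sum_{\sigma\in\sym N}\Bigl(-\tfrac1k\Bigr)^{\nu(\sigma)}\alpha^{\nu(\rho\sigma)}=\sum_{\rho\in\sym N}\Bigl(\prod_{j}b_{\rho(j)j}\Bigr)\bigl(z_\alpha z_{-1/k}\bigr)(\rho).
\]
Now the repeated-column structure of $B=\varpi_k(A)$ enters: column $j$ of $B$ is column $\lceil j/k\rceil$ of $A$, so $\prod_j b_{\rho(j)j}=\prod_{p=1}^{n}\prod_{i\in\rho(I_p)}a_{ip}$ depends on $\rho$ only through the ordered set partition $(\rho(I_1),\dots,\rho(I_n))$ of $\{1,\dots,N\}$ into $n$ blocks of size $k$, and the permutations realising a fixed such partition $(R_\bullet)$ form a single coset $\rho_0 H$, $\rho_0=\rho_0(R_\bullet)$ any representative. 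Grouping the sum by these partitions, and performing the same grouping on $\wrdet_kA=\adet[-1/k]B=\sum_{\rho}(-1/k)^{\nu(\rho)}\prod_j b_{\rho(j)j}$, I obtain
\[
S=|H|\sum_{(R_\bullet)}\Bigl(\prod_{p}\prod_{i\in R_p}a_{ip}\Bigr)\bigl((z_\alpha z_{-1/k})\,e_H\bigr)(\rho_0),\qquad
\wrdet_kA=|H|\sum_{(R_\bullet)}\Bigl(\prod_{p}\prod_{i\in R_p}a_{ip}\Bigr)\bigl(z_{-1/k}\,e_H\bigr)(\rho_0),
\]
using $\sum_{h\in H}w(\rho_0 h)=|H|\,(w\,e_H)(\rho_0)$. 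Thus it suffices to prove the group-algebra identity $(z_\alpha z_{-1/k})\,e_H=f_{(k^n)}(\alpha)\,z_{-1/k}\,e_H$.

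For this I would expand $z_{-1/k}\,e_H=\sum_{\mu\vdash N}f_\mu(-1/k)\,e_\mu e_H$ and observe that each summand vanishes unless $\mu=(k^n)$, for one of two reasons. First, $f_\mu(-1/k)=\prod_{(i,j)\in\mu}\bigl(1-\tfrac{j-i}{k}\bigr)$ is nonzero only if $\mu$ has no box of content $k$, i.e.\ only if $\mu_1\le k$. Second, since $e_\mu$ is central, $e_\mu e_H\ne0$ iff $\Smod\mu$ occurs in the left ideal $\C[\sym N]e_H$; and $\C[\sym N]e_H\cong\Ind_{H}^{\sym N}\1$ (the permutation module on the cosets of $H$), whose irreducible content is $\bigoplus_\mu K_{\mu,(k^n)}\,\Smod\mu$ by Young's rule, so $e_\mu e_H\ne0$ iff $K_{\mu,(k^n)}>0$, i.e.\ iff $\mu\trianglerighteq(k^n)$. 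But $\mu_1\le k$ and $\mu\trianglerighteq(k^n)$ together force $\mu=(k^n)$ (from $\mu_1\ge k$ one gets $\mu_1=k$, then $\mu_1+\mu_2\ge2k$ gives $\mu_2=k$, and so on). Hence $z_{-1/k}\,e_H=f_{(k^n)}(-1/k)\,e_{(k^n)}e_H$, and since $z_\alpha$ is central with $z_\alpha e_{(k^n)}=f_{(k^n)}(\alpha)e_{(k^n)}$,
\[
(z_\alpha z_{-1/k})\,e_H=z_\alpha\bigl(z_{-1/k}\,e_H\bigr)=f_{(k^n)}(-1/k)\,(z_\alpha e_{(k^n)})\,e_H=f_{(k^n)}(\alpha)\,f_{(k^n)}(-1/k)\,e_{(k^n)}e_H=f_{(k^n)}(\alpha)\,z_{-1/k}\,e_H.
\]
Substituting this into the two displayed expansions yields $S=f_{(k^n)}(\alpha)\wrdet_kA$, which is \eqref{eq:average_formula}.

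The step I expect to be the main obstacle is the passage into the group algebra: realising that once all the permutation matrices $P(\sigma)$ are summed over, the repeated-column shape of $\varpi_k(A)$ forces the remaining combinatorics to live on cosets of $H=\sym{(k^n)}$, so that the operative element is $z_\alpha z_{-1/k}\cdot e_H$ rather than $z_\alpha z_{-1/k}$ alone --- and then spotting that the two vanishing conditions, $f_\mu(-1/k)=0$ off $\{\mu_1\le k\}$ and $e_\mu e_H=0$ off $\{\mu\trianglerighteq(k^n)\}$, are exactly complementary, so that only the rectangle $(k^n)$ survives. The remaining ingredients --- the reindexing identity for $\adet(CP(g))$, the coset bookkeeping for ordered set partitions, the decomposition $\C[\sym N]e_H\cong\Ind_H^{\sym N}\1$, and the scalar identity $z_x\,e_\lambda=f_\lambda(x)e_\lambda$ --- are routine or classical.
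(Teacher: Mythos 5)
Your proof is correct, but it takes a genuinely different route from the paper. The paper reduces Theorem \ref{thm:main_result} to the invariant-theoretic characterization of $\wrdet_k$ (Lemma \ref{lem:char_of_wrdet}, quoted from \cite{KW2008}): it checks that $F(\alpha;A)=\adet[\alpha,-1/k]\varpi_k(A)$ is multilinear in rows, relatively invariant under right translation by $Q\in M_n$ (this is where the weak-alternation Lemma \ref{lem:weak alternating} is needed), and invariant under left translation by $\sym k^n$, and then pins down the constant by evaluating at $\I_{n,k}$ via the Fourier expansion of $\alpha^{\nu(\cdot)}$ and the observation that only $\lambda=(k^n)$ survives the two vanishing conditions. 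You bypass the characterization lemma and the weak-alternation lemma entirely: you collapse both sides along cosets of $H=\sym k^n$ (using that $\prod_j b_{\rho(j)j}$ depends only on the ordered set partition $(\rho(I_1),\dots,\rho(I_n))$) and reduce the theorem to the group-algebra identity $(z_\alpha z_{-1/k})e_H=f_{(k^n)}(\alpha)\,z_{-1/k}e_H$, which you prove with the same two ingredients the paper uses only in step (C): the expansion $z_x=\sum_\lambda f_\lambda(x)e_\lambda$ (the paper's Lemma \ref{lem:Fourier}, which you justify by the same Jucys--Murphy argument) and the rectangle-pinning trick, where your criterion $e_\mu e_H\ne0\iff K_{\mu,(k^n)}>0$ via $\C[\sym{kn}]e_H\cong\Ind_H^{\sym{kn}}\1$ and Young's rule replaces the paper's use of $K_{\lambda,(k^n)}$ through $\omega^\lambda_{(k^n)}(1)$, and dominance plus $\mu_1\le k$ plays the role of the paper's pair of conditions $\lambda_1\le k$, $\len\lambda\le n$. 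What each approach buys: the paper's argument outsources the hard linear algebra to \cite{KW2008} and needs only one explicit evaluation, and its intermediate steps (A)--(C) are reused for the corollaries; your argument is self-contained modulo classical representation theory, avoids citing Corollary 5.8 of \cite{KW2008}, and actually establishes the slightly stronger coefficient-wise statement $\bigl((z_\alpha z_{-1/k})e_H\bigr)(\rho_0)=f_{(k^n)}(\alpha)\bigl(z_{-1/k}e_H\bigr)(\rho_0)$ for every $\rho_0$, i.e.\ the identity already holds on each coset of $H$ before summing against the entries of $A$.
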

When $k=1$, the theorem is reduced to the equality \eqref{eq:signed average of adet}.

\section{Proof of the theorem}

For later use, we put $\I_{n,k}=I_n\otimes\1_{k,1}$.
We postpone the proofs of the lemmas used in this section to \S \ref{sec:proofs of lemmas}.

\subsection{Reduction}

To prove the theorem, we need the characterization of the $k$-wreath determinant.
\begin{lem}[Corollary 5.8 in \cite{KW2008}]\label{lem:char_of_wrdet}
Suppose that a function $f\colon M_{kn,n}\to\C$ satisfies the following conditions.
\begin{itemize}
\item[{\upshape (W1)}] $f$ is multilinear in row vectors.
\item[{\upshape (W2)}] $f(AQ)=f(A)\,(\det Q)^k$ for any $Q\in M_n$.
\item[{\upshape (W3)}] $f(P(g)A)=f(A)$ for any $g\in\sym k^n$.
\end{itemize}
Then $f$ is equal to the $k$-wreath determinant $\wrdet_k$ up to constant multiple.
\end{lem}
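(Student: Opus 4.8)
The plan is to determine $f$ from its values on the $0$–$1$ matrices produced by multilinearity, and to show that (W2) and (W3) force these values to be proportional to those of $\wrdet_k$. Writing $\ve_1,\dots,\ve_n$ for the standard row basis of $\C^n$, condition (W1) gives
\[
f(A)=\sum_{\phi\colon\{1,\dots,kn\}\to\{1,\dots,n\}}\Bigl(\prod_{r=1}^{kn}a_{r\phi(r)}\Bigr)c_\phi,
\qquad c_\phi:=f(E_\phi),
\]
where $E_\phi\in M_{kn,n}$ is the matrix whose $r$-th row is $\ve_{\phi(r)}$. Since the monomials $\prod_r a_{r\phi(r)}$ are linearly independent as functions of $A$, the map $f$ is determined by the array $(c_\phi)_\phi$, and (W1)–(W3) become a system of linear constraints on this array. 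As $\wrdet_k=\adet[-1/k]\circ\varpi_k$ is itself multilinear in the rows and satisfies (W2) and (W3), it is a \emph{nonzero} solution of this system; hence it suffices to prove that the solution space is one-dimensional.

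First I would shrink the support of $(c_\phi)$. Taking $Q=\operatorname{diag}(t_1,\dots,t_n)$ in (W2) scales the $j$-th column by $t_j$, so comparing monomials on the two sides of $f(AQ)=(\det Q)^k f(A)=(t_1\cdots t_n)^k f(A)$ yields $\prod_j t_j^{|\phi^{-1}(j)|}c_\phi=\prod_j t_j^{k}c_\phi$ identically in $t$. Thus $c_\phi=0$ unless $\phi$ is \emph{balanced}, i.e.\ each value in $\{1,\dots,n\}$ is attained exactly $k$ times; equivalently $E_\phi=P(\pi)\I_{n,k}$ for some $\pi\in\sym{kn}$. Next, $Q=P(\tau)$ in (W2) gives $c_{\tau\circ\phi}=(\sgn\tau)^k c_\phi$, while (W3) gives $c_\phi=c_{\phi\circ g}$ for $g\in\sym{(k^n)}$; together these pin down the $c_\phi$ along the orbits of the left $\sym{(k^n)}$- and right $\sym n$-actions on balanced functions.

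Those symmetries do not act transitively on balanced functions, so the heart of the matter is to link distinct orbits, and this is where the unipotent instances of (W2) enter. Applying (W2) to the transvection $Q=I_n+tE_{j'j}$ (so $\det Q=1$) and extracting the coefficient of $t$ in $f(AQ)=f(A)$ produces, for each $\psi$ with $|\psi^{-1}(j')|=k+1$ and $|\psi^{-1}(j)|=k-1$, the relation $\sum_{r\in\psi^{-1}(j')}c_{\psi_r}=0$ among the $k+1$ balanced functions $\psi_r$ obtained from $\psi$ by reassigning one index $r$ from value $j'$ to value $j$. I expect the main obstacle to be showing that these transvection relations, together with the orbit identifications above, leave exactly one free parameter. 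The conceptual reason—which also confirms the surviving solution is $\wrdet_k$—is Schur–Weyl duality: conditions (W1)–(W2) identify the solution space with the multiplicity space of the irreducible $GL_n$-module indexed by $(k^n)$ inside $(\C^n)^{\otimes kn}$, namely the Specht module $\Smod{(k^n)}$, and (W3) selects its $\sym{(k^n)}$-invariants, whose dimension equals the Kostka number $K_{(k^n),(k^n)}=1$. I would therefore arrange the combinatorial argument to reproduce this count—reducing every $c_\phi$ by the relations to the single value $c_{\phi_0}=f(\I_{n,k})$—and conclude $f=\text{const}\cdot\wrdet_k$.
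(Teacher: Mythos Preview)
Your Schur--Weyl argument is correct and complete: (W1)--(W2) cut out a copy of the Specht module $\Smod{(k^n)}$ inside $((\C^n)^{\otimes kn})^*$, and (W3) selects its $\sym{(k^n)}$-invariants, whose dimension is $K_{(k^n),(k^n)}=1$. Once you invoke this, the transvection relations you set up are not actually needed; the ``main obstacle'' you anticipate is absorbed into the cited representation-theoretic fact.

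The paper takes a genuinely different, more elementary route that avoids both the transvection analysis and Schur--Weyl duality. Rather than decomposing $Q$ into diagonal, permutation, and unipotent pieces, it applies (W2) \emph{once} with a generic $Q=X\in M_n$ to the specific argument $A=\I_{n,k}$. Your multilinear expansion, restricted to balanced $\phi$ and grouped by the double coset $\sym k^n\backslash\sym{kn}/\sym k^n$ (which determines the monomial $x^{M(\sigma)}=\prod_{i,j}x_{ij}^{m_{ij}(\sigma)}$), then reads
\[
f(\I_{n,k}X)=\sum_{\sigma}\phi(\sigma)\,\#I(\sigma)\,x^{M(\sigma)},
\]
while (W2) forces this polynomial to equal $f(\I_{n,k})\,(\det X)^k$. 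Matching monomial coefficients gives every $\phi(\sigma)$ explicitly as $f(\I_{n,k})/\#I(\sigma)$ times the coefficient of $x^{M(\sigma)}$ in $(\det X)^k$, so $f$ is pinned down by the single value $f(\I_{n,k})$. Your approach explains structurally \emph{why} the solution space is one-dimensional; the paper's approach is self-contained and, as a byproduct, yields the explicit formula for $\phi(\sigma)/\phi(1)$ in terms of coefficients of $(\det X)^k$---a formula the paper exploits later (see the remark after Corollary~\ref{cor:values_of_zsf}).
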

To determine the constant factor explicitly in our discussion below, the formula
\begin{equation*}
\wrdet_k\I_{n,k}
=\adet[-1/k](I_n\otimes\1_k)
=\Bigl(\frac{k!}{k^k}\Bigr)^n
\end{equation*}
is useful (see Lemma 4.6 in \cite{KW2008}).

Let $F(\alpha;A)$ be the left-hand side of \eqref{eq:average_formula}.
Since the multilinearlity of $F(\alpha;A)$ in row vectors of $A$ is obvious by its definition,
we have only to show the following three equations to obtain the theorem.
\begin{align}
F(\alpha;AQ)&=F(\alpha;A)(\det Q)^k\qquad(Q\in M_n), \tag{A}\label{eq:to_be_proved_1} \\
F(\alpha;P(g)A)&=F(\alpha;A)\qquad(g\in\sym k^n), \tag{B}\label{eq:to_be_proved_2} \\
F(\alpha;\I_{n,k})&=\Bigl(\frac{k!}{k^k}\Bigr)^nf_{(k^n)}(\alpha). \tag{C}\label{eq:to_be_proved_3}
\end{align}

We introduce the two-parameter deformation of the determinant as
\begin{equation}
\adet[\alpha,\beta]A
:=\sum_{\tau,\sigma\in\sym N}\alpha^{\nu(\tau)}\beta^{\nu(\sigma)}\prod_{i=1}^N a_{\tau(i)\sigma(i)}.
\end{equation}
It is clear that this is symmetric in $\alpha$ and $\beta$,
i.e. $\adet[\alpha,\beta]A=\adet[\beta,\alpha]A$.
Notice that
\begin{equation*}
F(\alpha;A)
=\adet[\alpha,-1/k]\bigl(\varpi_k(A)\bigr)
=\sum_{\sigma\in\sym{kn}}\alpha^{\nu(\sigma)}\adet[-1/k]\bigl(\varpi_k(A)P(\sigma)\bigr).
\end{equation*}

\begin{rem}\label{rem:averages_of_adet}
The equalities \eqref{eq:unsigned average of adet} and \eqref{eq:signed average of adet} are readily obtained
from the symmetry $\adet[\alpha,\pm1]A=\adet[\pm1,\alpha]A$.
\end{rem}




\subsection{Proofs of \eqref{eq:to_be_proved_1} and \eqref{eq:to_be_proved_2}}

Let $\va_1,\dots,\va_n\in\C^{kn}$.
We have only to prove \eqref{eq:to_be_proved_1} when $Q$ is an elementary matrix.
Namely, it is suffice to verify
\begin{align}
F(\alpha;(\va_1,\dots,\va_j+c\va_i,\dots,\va_n))
&=F(\alpha;(\va_1,\dots,\va_j,\dots,\va_n))\qquad(i\ne j), \label{eq:kihon1} \\
F(\alpha;(\va_1,\dots,c\va_j,\dots,\va_n))
&=c^kF(\alpha;(\va_1,\dots,\va_j,\dots,\va_n))\qquad(c\in\C). \label{eq:kihon2}
\end{align}
The equation \eqref{eq:kihon2} obviously follows from
the definition of $F(\alpha;A)$ and the multilinearity of the $\alpha$-determinant in column vectors.
The equation \eqref{eq:kihon1} is guaranteed by the following lemma.

\begin{lem}[Lemma 2.3 in \cite{KW2008}]\label{lem:weak alternating}
Let $N$ and $k$ be positive integers such that $k<N$.
If more than $k$ column vectors in $A\in M_N$ are equal, then $\adet[-1/k]A=0$.
\end{lem}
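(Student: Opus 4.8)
The plan is to prove the vanishing $\adet[-1/k]A=0$ directly from the definition, by reorganising the defining sum around the block of equal columns. Let $S\subseteq\{1,\dots,N\}$ be the set of positions of the coinciding columns, so $|S|=m>k$, and let $v=(v_i)\in\C^N$ be their common value, so that $a_{ij}=v_i$ for all $j\in S$. Expanding
\begin{equation*}
\adet A=\sum_{\sigma\in\sym N}\alpha^{\nu(\sigma)}\prod_{j\in S}v_{\sigma(j)}\prod_{i\notin S}a_{\sigma(i)i}
\end{equation*}
and sorting the permutations $\sigma$ by the $m$-subset $T:=\sigma(S)$, the factor $\prod_{j\in S}v_{\sigma(j)}=\prod_{t\in T}v_t$ depends only on $T$, while $\prod_{i\notin S}a_{\sigma(i)i}$ depends only on the restriction of $\sigma$ to $\{1,\dots,N\}\setminus S$, a bijection $\phi\colon\{1,\dots,N\}\setminus S\to\{1,\dots,N\}\setminus T$. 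Hence it suffices to show that, for every such $T$ and $\phi$, the polynomial
\begin{equation*}
g_{T,\phi}(\alpha):=\sum_{\psi}\alpha^{\nu(\psi\sqcup\phi)}
\end{equation*}
vanishes at $\alpha=-1/k$; here the sum runs over all bijections $\psi\colon S\to T$, and $\psi\sqcup\phi$ is the permutation of $\{1,\dots,N\}$ that agrees with $\psi$ on $S$ and with $\phi$ on the complement.

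To evaluate $g_{T,\phi}$, fix $T$ and $\phi$ and decompose the partial bijection $\phi$ into $q$ cycles and $p$ directed paths. Every $\phi$-cycle has all its vertices in $\{1,\dots,N\}\setminus(S\cup T)$ and is already a cycle of $\psi\sqcup\phi$ regardless of $\psi$; every $\phi$-path runs from a vertex of $T\setminus S$ to a vertex of $S\setminus T$, so $p=|T\setminus S|=|S\setminus T|=m-|S\cap T|$. Contracting each path to a single vertex produces an $m$-element set $W$ — the set $S\cap T$ together with one vertex per path — on which $\psi$ induces, after the obvious identification, a permutation $\bar\psi$; the assignment $\psi\mapsto\bar\psi$ is a bijection onto $\sym W\cong\sym m$, and the number of cycles of $\psi\sqcup\phi$ equals $q$ plus the number of cycles of $\bar\psi$. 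Collecting exponents and invoking \eqref{eq:adet of all-one matrix} for $\sym m$ gives
\begin{equation*}
g_{T,\phi}(\alpha)=\alpha^{\,N-m-q}\,\adet\1_m=\alpha^{\,N-m-q}f_{(m)}(\alpha),
\end{equation*}
with $N-m-q\ge0$ (the $q$ cycles of $\phi$ all lie in $\{1,\dots,N\}\setminus(S\cup T)$, a set of at most $N-m$ elements). Since $m>k$, the linear factor $1+k\alpha$ occurs in $f_{(m)}(\alpha)=\prod_{i=1}^{m-1}(1+i\alpha)$, whence $f_{(m)}(-1/k)=0$; therefore $g_{T,\phi}(-1/k)=0$ and $\adet[-1/k]A=0$.

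The genuine obstacle is the combinatorial bookkeeping of the second paragraph: pinning down the contracted vertex set $W$, verifying that the cycle count of $\psi\sqcup\phi$ splits as claimed, and that $\psi\mapsto\bar\psi$ is a bijection onto $\sym W$; everything else is formal and rests on the single evaluation $\adet\1_m=f_{(m)}(\alpha)$. The same mechanism underlies the shorter route suggested in the introduction: permuting the identical columns shows that $\sum_{\sigma\in\sym S}\adet(AP(\sigma))=m!\,\adet A$, where $\sym S$ is the Young subgroup permuting the positions in $S$; since this sum is divisible, as a polynomial in $\alpha$, by $(1+\alpha)(1+2\alpha)\cdots(1+(m-1)\alpha)$, so is $\adet A$, and specialising $\alpha=-1/k$ (a root of the factor $1+k\alpha$, as $k\le m-1$) gives the lemma. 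In the extreme case $N=m=k+1$ this is simply $f_{(k+1)}(-1/k)=\prod_{i=1}^{k}(1-i/k)=0$.
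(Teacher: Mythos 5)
Your proof is correct, and at its core it runs on the same engine as the paper's, but it is organized differently and is self-contained where the paper is terse. The paper never expands $\adet A$ directly: it proves that the symmetrized sum \eqref{eq:unsigned average of adet} over the subgroup permuting a block of positions is divisible by $(1+\alpha)\cdots(1+(m-1)\alpha)$ for an \emph{arbitrary} matrix (stated there with the block size called $k$), by constructing recursively, for each $\tau$, an element $\tau_0$ of the block subgroup with $\nu(\tau\sigma)=\nu(\tau\tau_0^{-1})+\nu(\tau_0\sigma)$, so that each coset sum collapses to $\alpha^{\nu(\tau\tau_0^{-1})}\adet\1_m$; the lemma then follows because equal columns give $\adet\bigl(AP(\sigma)\bigr)=\adet A$, so the symmetrized sum is $m!\adet A$ --- exactly the ``shorter route'' you sketch in your final paragraph (there you quote the divisibility from the introduction rather than prove it, but your main argument supplies it). Your main proof instead uses the equal columns from the outset: fixing $(T,\phi)$ is the same as fixing a coset $\sigma\,\sym{S}$ of the subgroup permuting the equal-column positions, and your identity $g_{T,\phi}(\alpha)=\alpha^{N-m-q}f_{(m)}(\alpha)$ is precisely the coset-sum evaluation that the paper obtains via $\tau_0$, derived here by the digraph/path-contraction bookkeeping; I checked that bookkeeping and it is sound (the cycles of $\psi\sqcup\phi$ are the $q$ cycles of $\phi$ together with, one for one, the cycles of $\bar\psi=c^{-1}\psi d$, where $d\colon W\to S$ and $c\colon W\to T$ are the sink/source identifications, and $\psi\mapsto\bar\psi$ is a bijection onto $\sym{W}$), and the exponent $N-m-q$ is indeed nonnegative since the $\phi$-cycles live off $S\cup T$. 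What your version buys is an explicit, direct verification with the extraneous exponent made visible and no appeal to an auxiliary general statement; what the paper's version buys is the divisibility of the average \eqref{eq:unsigned average of adet} for arbitrary $A$, a fact of independent interest, at the cost of leaving the final reduction (apply it to the $m>k$ equal columns and use $AP(\sigma)=A$) implicit. Both hinge on the same two ingredients: additivity of $\nu$ across the coset and the evaluation $\adet\1_m=f_{(m)}(\alpha)$, whose factor $1+k\alpha$ vanishes at $\alpha=-1/k$ since $k\le m-1$.
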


The second equality \eqref{eq:to_be_proved_2} is shown
by using \eqref{eq:equivariance}, \eqref{eq:to_be_proved_1} and
the elementary fact
\begin{equation}\label{eq:commute_with_P}
\adet\bigl(P(\sigma)A\bigr)=\adet\bigl(AP(\sigma)\bigr)\qquad(A\in M_N,\ \sigma\in\sym N)
\end{equation}
as follows: For any $g\in\sym k^n$, we have
\begin{align*}
F(\alpha;P(g)A)
&=\sum_{\sigma\in\sym{kn}}\alpha^{\nu(\sigma)}\adet[-1/k]\Bigl(\varpi_k\bigl(P(g)A\bigr)P(\sigma)\Bigr)
\\
&=\sum_{\sigma\in\sym{kn}}\alpha^{\nu(\sigma)}\adet[-1/k]\Bigl(P(g)\varpi_k\bigl(A\bigr)P(\sigma)\Bigr)
\\
&=\sum_{\sigma\in\sym{kn}}\alpha^{\nu(g^{-1}\sigma g)}\adet[-1/k]\Bigl(\varpi_k\bigl(A\bigr)P(g)P(g^{-1}\sigma g)\Bigr)
=F(\alpha;A).
\end{align*}

\subsection{Proof of \eqref{eq:to_be_proved_3}}

Let $N$ be a positive integer.
For each partition $\lambda\vdash N$,
$\chi^\lambda$ is the irreducible character of $\sym N$ corresponding to $\lambda$
and $f^\lambda$ is the number of standard tableaux with shape $\lambda$.
We denote by $K_{\lambda\mu}$ the Kostka number,
that is, the number of tableaux with shape $\lambda$ and weight $\mu$. 
Note that $f^\lambda=K_{\lambda(1^N)}=\chi^\lambda(1)$ for each $\lambda\vdash N$.
By the hook formula for $f^\lambda$ and the definition of $f_\lambda(x)$, we have
\begin{equation}\label{eq:values at -1/k and 1/n}
f_{(k^n)}(-1/k)=\frac{(kn)!}{k^{kn}}\frac1{f^{(k^n)}},\qquad
f_{(k^n)}(1/n)=\frac{(kn)!}{n^{kn}}\frac1{f^{(k^n)}}.
\end{equation}

For each pair $\lambda,\mu$ of partitions of $N$, define
\begin{equation*}
\omega^\lambda_\mu(x):=\frac1{\mu!}\sum_{\tau\in\sym\mu}\chi^\lambda(x\tau)
\qquad(x\in\sym N),
\end{equation*}
where $\sym\mu=\sym{\mu_1}\times\sym{\mu_2}\times\dots$ is the Young subgroup associated to $\mu$
and $\mu!=\mu_1!\mu_2!\dots$ is its cardinality.
Here we regard the $i$-th component $\sym{\mu_i}$ in $\sym\mu$
as a subgroup of $\sym N$ consisting of permutations of the $\mu_i$ letters
$m+1,\dots,m+\mu_i$ with $m=\sum_{j<i}\mu_j$.
It is immediate to see that $\omega^\lambda_\mu$ is $\sym\mu$-biinvariant function on $\sym N$.
It is well known that
\begin{equation}\label{eq:Kostka}
K_{\lambda\mu}
=\frac1{\mu!}\sum_{\tau\in\sym\mu}\chi^\lambda(\tau)
=\omega^\lambda_\mu(1)
\end{equation}
for $\lambda,\mu\vdash N$.

Let $*$ be the convolution product defined by 
\begin{equation*}
(\phi_1*\phi_2)(x)=\sum_{\sigma\in\sym N}\phi_1(x\sigma)\phi_2(\sigma^{-1})
\end{equation*}
for $\phi_1,\phi_2\colon\sym N\to\C$.
Recall that the irreducible characters satisfy
\begin{equation}\label{eq:orthogonality}
\chi^\lambda*\chi^\rho=\delta_{\lambda\rho}\frac{N!}{f^\lambda}\chi^\lambda\qquad(\lambda,\rho\vdash N).
\end{equation}

We need the following Fourier expansion formula.
\begin{lem}[Fourier expansion of $\alpha^{\nu(\cdot)}$]\label{lem:Fourier}
\begin{equation}\label{eq:expansion_of_alpha^nu}
\alpha^{\nu(\sigma)}
=\frac1{N!}\sum_{\lambda\vdash N}f^\lambda f_\lambda(\alpha)\chi^\lambda(\sigma)
\qquad(\sigma\in\sym N).
\end{equation}
\end{lem}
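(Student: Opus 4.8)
The plan is to exploit the fact that $\sigma\mapsto\alpha^{\nu(\sigma)}$ is a class function on $\sym N$, since $\nu(\sigma)=N-(\text{number of cycles of }\sigma)$ depends only on the cycle type of $\sigma$. Hence it expands as $\alpha^{\nu(\sigma)}=\sum_{\lambda\vdash N}c_\lambda(\alpha)\chi^\lambda(\sigma)$ with $c_\lambda(\alpha)=\frac1{N!}\sum_{\sigma\in\sym N}\alpha^{\nu(\sigma)}\chi^\lambda(\sigma^{-1})$ by the first orthogonality relation, and since the characters of $\sym N$ are real and $\nu(\sigma^{-1})=\nu(\sigma)$, this is $c_\lambda(\alpha)=\frac1{N!}\sum_{\sigma}\alpha^{\nu(\sigma)}\chi^\lambda(\sigma)$. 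Thus \eqref{eq:expansion_of_alpha^nu} is equivalent to the single scalar identity
\[
\sum_{\sigma\in\sym N}\alpha^{\nu(\sigma)}\chi^\lambda(\sigma)=f^\lambda f_\lambda(\alpha)\qquad(\lambda\vdash N).
\]

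To prove this I would pass to the group algebra and set $X_N:=\sum_{\sigma\in\sym N}\alpha^{\nu(\sigma)}\sigma\in\C[\sym N]$, so that the left-hand side above is precisely the trace of $X_N$ acting on the irreducible module $\Smod\lambda$. Writing each $\tau\in\sym N$ uniquely as $\tau=\sigma$ or $\tau=\sigma\,(i,N)$ with $\sigma\in\sym{N-1}$ (i.e. $\sigma$ fixing $N$) and $1\le i\le N-1$, and checking how $\nu$ behaves under right multiplication by $(i,N)$ — it increases by $1$, because this transposition merges the singleton cycle $\{N\}$ of $\sigma$ into another cycle — one obtains the recursion $X_N=X_{N-1}\bigl(1+\alpha J_N\bigr)$, where $J_m=\sum_{i=1}^{m-1}(i,m)$ is the $m$-th Jucys--Murphy element. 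By induction this gives $X_N=\prod_{m=1}^N(1+\alpha J_m)$.

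Now I would evaluate $\prod_{m=1}^N(1+\alpha J_m)$ on the Young seminormal basis of $\Smod\lambda$: each $J_m$ acts diagonally there, with eigenvalue on the vector indexed by a standard tableau $T$ equal to the content $j-i$ of the cell of $T$ containing $m$. As $m$ ranges over $1,\dots,N$ these cells range over all cells of $\lambda$ exactly once, so the operator acts on \emph{every} basis vector by the same scalar $\prod_{(i,j)\in\lambda}(1+(j-i)\alpha)=f_\lambda(\alpha)$; hence $X_N$ acts on $\Smod\lambda$ as the scalar $f_\lambda(\alpha)$. Taking the trace on $\Smod\lambda$ yields $\sum_{\sigma}\alpha^{\nu(\sigma)}\chi^\lambda(\sigma)=f_\lambda(\alpha)\dim\Smod\lambda=f^\lambda f_\lambda(\alpha)$, and dividing by $N!$ gives \eqref{eq:expansion_of_alpha^nu}.

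The main obstacle is the one nontrivial imported fact: that the Jucys--Murphy elements act semisimply on $\Smod\lambda$ with the content eigenvalues (the seminormal-form description of $\sym N$). If one prefers to avoid it, there is an entirely combinatorial alternative: group the sum by conjugacy class to get $\sum_{\sigma}\alpha^{\nu(\sigma)}\chi^\lambda(\sigma)=\sum_{\rho\vdash N}\frac{N!}{z_\rho}\alpha^{N-\ell(\rho)}\chi^\lambda_\rho$, recognize $\sum_{\rho}z_\rho^{-1}\chi^\lambda_\rho\,t^{\ell(\rho)}$ as the image of the Schur function $s_\lambda$ under the specialization $p_k\mapsto t$ $(k\ge1)$, which equals $\prod_{(i,j)\in\lambda}\frac{t+j-i}{h(i,j)}$ by the hook--content formula (a polynomial identity in $t$), specialize $t\mapsto1/\alpha$, and finally use $\prod_{(i,j)\in\lambda}h(i,j)=N!/f^\lambda$ (the hook length formula) to clear denominators; this produces the same identity. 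In either route the work is short once the reduction to the scalar identity above has been made.
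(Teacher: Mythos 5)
Your proof is correct and follows essentially the same route as the paper: both establish the factorization $\sum_{\sigma}\alpha^{\nu(\sigma)}\sigma=\prod_{m}(1+\alpha X_m)$ in terms of Jucys--Murphy elements and then use their content eigenvalues on the Gelfand--Tsetlin (Young seminormal) basis to compute the trace $f^\lambda f_\lambda(\alpha)$ on each irreducible $\Smod\lambda$. The only cosmetic difference is that you reduce to the scalar identity via character orthogonality while the paper expands the central element in the projections $P_\lambda$; these are the same computation.
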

By \eqref{eq:orthogonality} and \eqref{eq:expansion_of_alpha^nu}, we have
\begin{equation*}
\alpha^{\nu(\cdot)}*\beta^{\nu(\cdot)}
=\frac1{N!}\sum_{\lambda\vdash N}f^\lambda f_\lambda(\alpha)f_\lambda(\beta)\chi^\lambda.
\end{equation*}
Hence it follows that
\begin{equation*}
\adet[\alpha,\beta]X
=\frac1{N!}\sum_{\lambda\vdash N}f^\lambda f_\lambda(\alpha)f_\lambda(\beta)\Imm_\lambda X,
\end{equation*}
where $\Imm_\lambda X$ is the \emph{immanant} associated to $\lambda$ defined by
\begin{equation}\label{eq:definition of immanant}
\Imm_\lambda X=\sum_{\sigma\in\sym N}\chi^\lambda(\sigma) x_{\sigma(1)1}x_{\sigma(2)2}\dots x_{\sigma(N)N}
=\sum_{\sigma\in\sym N}\chi^\lambda(\sigma) \adet[0](P(\sigma^{-1})X).
\end{equation}
For a partition $\mu=(\mu_1,\mu_2,\dots,\mu_l)\vdash N$, define
\begin{equation*}
\1_\mu:=\begin{pmatrix}
\1_{\mu_1} \\ & \1_{\mu_2} \\ && \ddots \\ &&& \1_{\mu_l}
\end{pmatrix}.
\end{equation*}
For example, we have $\1_{(k^n)}=I_n\otimes\1_k$.
We have
\begin{equation*}
\omega^\lambda_\mu(g)=\frac1{\mu!}\Imm_\lambda\bigl(P(g)\1_\mu\bigr)
\end{equation*}
since
\begin{equation*}
\adet[0](P(g)\1_\mu)
=\begin{cases}
1 & g\in\sym\mu, \\
0 & g\notin\sym\mu.
\end{cases}
\end{equation*}
Thus it follows that
\begin{equation}\label{eq:adet_ab_1rho}
\adet[\alpha,\beta](P(g)\1_\mu)
=\frac{\mu!}{N!}\sum_{\lambda\vdash N}f^\lambda f_\lambda(\alpha)f_\lambda(\beta)\omega^\lambda_\mu(g)
\qquad(g\in\sym N).
\end{equation}
As a particular case, we have
\begin{equation}\label{eq:adet_ab_Ix1}
\adet[\alpha,\beta](I_n\otimes\1_k)
=\frac{(k!)^n}{(kn)!}\sum_{\lambda\vdash kn}f^\lambda K_{\lambda,(k^n)}f_\lambda(\alpha)f_\lambda(\beta).
\end{equation}
by putting $N=kn$, $\mu=(k^n)$ and $g=1$ because of \eqref{eq:Kostka}.

Now we further assume that $\beta=-1/k$ in \eqref{eq:adet_ab_Ix1}.
By definition, we have $f_\lambda(-1/k)=0$ unless $\lambda_1\le k$.
On the other hand, $K_{\lambda,(k^n)}=0$ unless $\len\lambda\le n$.
Therefore, the summand
in \eqref{eq:adet_ab_Ix1}
vanishes unless $\lambda=(k^n)$.
Thus it follows that
\begin{equation*}
F(\alpha;\I_{n,k})
=\adet[\alpha,-1/k](I_n\otimes\1_k)
=\frac{(k!)^n}{(kn)!}f^{(k^n)}f_{(k^n)}(\alpha)f_{(k^n)}(-1/k)
=\frac{(k!)^n}{k^{kn}}f_{(k^n)}(\alpha),
\end{equation*}
where we use \eqref{eq:values at -1/k and 1/n} in the last equality.
This completes the proof of the theorem.


\section{Proofs of the lemmas}\label{sec:proofs of lemmas}

Here we prove Lemmas used in the previous section.
The proof of Lemma \ref{lem:char_of_wrdet} below
is different from the one given in \cite{KW2008},
and is rather elementary.
The proof of Lemma \ref{lem:weak alternating} is just a revision of the one given in \cite{KW2008}.
Lemma \ref{lem:Fourier} is prove by using Okounkov-Vershik theory \cite{OV1996}
on representations of symmetric groups.

\subsection{Proof of Lemma \ref{lem:char_of_wrdet}}

Let $f\colon M_{kn,n}\to\C$ be a function satisfying the conditions (W1)--(W3).
We put
$$
\phi(\sigma)=f(P(\sigma)\I_{n,k})
$$
for $\sigma\in\sym{kn}$.
Notice that
$$
\phi(\tau\sigma\tau')=\phi(\sigma)
\qquad(\tau,\tau'\in\sym k^n,\;\sigma\in\sym{kn})
$$
by (W3) and the invariance $P(\tau')\I_{n,k}=\I_{n,k}$.
Let $I$ and $J$ be fixed complete systems of representatives of the coset $\sym{kn}/\sym k^n$
and the double coset $\sym k^n\backslash\sym{kn}/\sym k^n$ respectively,
and define $I(\sigma)$ for each $\sigma\in J$ to be the subset of $I$ such that
$\coprod_{\tau\in I(\sigma)}\tau\sym k^n=\sym k^n\sigma\sym k^n$.
Notice that $\phi(\tau)=\phi(\sigma)$ for each $\tau\in I(\sigma)$.

By (W1), we have
\begin{equation*}
f(A)=\sum_{1\le j_1,\dots,j_{kn}\le n}a_{1j_1}\dots a_{kn,j_{kn}}f\bigl(\tr(\ve_{j_1}~\dots~\ve_{j_{kn}})\bigr)
\end{equation*}
for $A=(a_{ij})\in M_{kn,n}$,
where $\ve_1,\ve_2,\dots,\ve_n$ are the standard basis vectors of $\C^{n}$.
By (W2), $f(\tr(\ve_{j_1}~\dots~\ve_{j_{kn}}))$ vanishes unless the matrix rank of
$(\ve_{j_1}~\dots~\ve_{j_{kn}})$ equals $n$,
or $(j_1,\dots,j_{kn})$ is a permutation of
$(\overbrace{1,\dots,1}^k,\dots,\overbrace{n,\dots,n}^k)$.
Hence it follows that
\begin{equation*}
f(A)=\sum_{\tau\in I}\phi(\tau)\;a_{\tau(1)1}\dots a_{\tau(kn)n}
=\sum_{\sigma\in J}\phi(\sigma)\sum_{\tau\in I(\sigma)}a_{\tau(1)1}\dots a_{\tau(kn)n}.
\end{equation*}
If we take $A=\I_{n,k}X$, $X=(x_{ij})\in M_n$, then we have
\begin{equation*}
f(\I_{n,k}X)
=\sum_{\sigma}\phi(\sigma)
\sum_{\tau\in I(\sigma)}x^{M(\tau)}
=\sum_{\sigma}\phi(\sigma)
\#I(\sigma) x^{M(\sigma)},
\end{equation*}
where
$$
x^{M(\sigma)}=\prod_{i,j}x_{ij}^{m_{ij}(\sigma)},\qquad
m_{ij}(\sigma)=\#\set{s}{(i-1)k<s\le ik,\;(j-1)k<\sigma(s)\le jk}.
$$
Notice that $M(\sigma)$ depends only on the double coset $\sym k^n\sigma\sym k^n$.
On the other hand, by (W2), we have $f(\I_{n,k}X)=f(\I_{n,k})(\det X)^k$.
Hence we get
\begin{equation}\label{eq:phi is equal to the coefficient}
\phi(\sigma)=\frac{f(\I_{n,k})}{\# I(\sigma)}
\times\text{coefficient of $x^{M(\sigma)}$ in }
(\det X)^k.
\end{equation}
As a result, we have
\begin{equation*}
f(A)=\frac{f(\I_{n,k})}{\wrdet_k \I_{n,k}}\wrdet_k A
\end{equation*}
as desired.

\subsection{Proof of Lemma \ref{lem:weak alternating}}

It is enough to prove that
the sum \eqref{eq:unsigned average of adet}
is divisible by $(1+\alpha)\dots(1+(k-1)\alpha)$.
We see that \eqref{eq:unsigned average of adet} is equal to
$$
\sum_{\tau\in\sym N}\left(\sum_{\sigma\in\sym k}\alpha^{\nu(\tau\sigma)}\right)\prod_{i=1}^N a_{\tau(i)i}.
$$
For each $\tau\in\sym N$, there uniquely exists $\tau_0\in\sym k$ such that
$\nu(\tau\sigma)=\nu(\tau\tau_0^{-1})+\nu(\tau_0\sigma)$ for any $\tau\in\sym k$.
In fact, if we define $g_i$ and $\tau_i$ for $i=n,n-1,\dots,1$ recursively by
$$
\tau_n=\tau;\qquad
g_i=(i~\tau_i(i)),\quad \tau_{i-1}=g_i\tau_i,
$$
then we have $\tau_0=g_kg_{k-1}\dots g_1$.
It follows that
$$
\sum_{\sigma\in\sym k}\alpha^{\nu(\tau\sigma)}
=\alpha^{\nu(\tau\tau_0^{-1})}\adet\1_k
=\alpha^{\nu(\tau\tau_0^{-1})}(1+\alpha)\dots(1+(k-1)\alpha).
$$
Hence the sum \eqref{eq:unsigned average of adet} is divisible by $(1+\alpha)\dots(1+(k-1)\alpha)$.

\subsection{Proof of Lemma \ref{lem:Fourier}}

Let $X_1,\dots,X_N$ be the Jucyc-Murphy elements of the group algebra $\C\sym N$:
\begin{equation*}
X_k=(1~k)+(2~k)+\dots+(k-1~k)\qquad(1\le k\le N).
\end{equation*}
It is elementary to see that
\begin{equation*}
\phi:=
\sum_{\sigma\in\sym N}\alpha^{\nu(\sigma)}\sigma
=(1+\alpha X_1)(1+\alpha X_2)\dots(1+\alpha X_N),
\end{equation*}
which is central since $\nu$ is a class function.
So it is a linear combination of the projections
\begin{equation*}
P_\lambda:=\frac1{N!}\sum_{\sigma\in\sym N}\chi^\lambda(\sigma)\sigma\qquad(\lambda\vdash N).
\end{equation*}

For each partition $\lambda\vdash N$,
let $\{v_T\}_{T\in\STab(\lambda)}$ be the Gelfand-Tsetlin basis (or Young basis)
of the irreducible representation $\Smod\lambda$ of $\sym N$
associated to $\lambda$, where $\STab(\lambda)$ is the set of standard tableaux with shape $\lambda$.
It is known that
if the number written in the $(i,j)$-position of $T$ is $k$, then
\begin{equation*}
X_kv_T=(j-i)v_T.
\end{equation*}
Hence it follows that
\begin{equation*}
\phi v_T=\prod_{(i,j)\in\lambda}(1+(j-i)\alpha)v_T=f_\lambda(\alpha)v_T
\end{equation*}
for any $T\in\STab(\lambda)$.
Thus we have
\begin{equation*}
\Tr\phi\big|_{\Smod\lambda}=f^\lambda f_\lambda(\alpha),
\end{equation*}
so that we get
\begin{equation*}
\phi
=\sum_{\lambda\vdash N}\Tr\phi\big|_{\Smod\lambda}P_\lambda
=\sum_{\sigma\in\sym N}\Bigl(\frac1{N!}\sum_{\lambda\vdash N}f^\lambda f_\lambda(\alpha)\chi^\lambda(\sigma)\Bigr)\sigma
\end{equation*}
as desired.

\section{Corollaries of the discussion}


We obtain the following ``determinantal'' formula of
the values of $\omega^\lambda_\mu$ and the Kostka numbers $K_{\lambda\mu}$ for rectangular-shaped Young diagrams $\lambda$
as a byproduct of the discussion above.

\begin{cor}\label{cor:values_of_omega}
For any $g\in\sym{kn}$ and $\mu\vdash kn$, it holds that
\begin{equation}\label{eq:omega_for_rect_shape}
\omega^{(k^n)}_\mu(g)
=\frac{f^{(k^n)}}{\mu!}\frac{\adet[-1/k,1/n](P(g)\1_\mu)}{\adet[-1/kn]\1_{kn}}.
\end{equation}
In particular, it holds that
\begin{equation}\label{eq:Kostka_for_rect_shape}
K_{(k^n)\mu}
=\frac{f^{(k^n)}}{\mu!}\frac{\adet[-1/k,1/n]\1_\mu}{\adet[-1/kn]\1_{kn}}.
\end{equation}
\end{cor}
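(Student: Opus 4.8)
The plan is to extract Corollary~\ref{cor:values_of_omega} directly from equation~\eqref{eq:adet_ab_1rho}, which expresses $\adet[\alpha,\beta](P(g)\1_\mu)$ as a weighted sum over $\lambda\vdash kn$ of $f^\lambda f_\lambda(\alpha)f_\lambda(\beta)\omega^\lambda_\mu(g)$. The idea is to choose the two parameters $\alpha,\beta$ so that only the single term $\lambda=(k^n)$ survives, exactly as was done in the proof of~\eqref{eq:to_be_proved_3}.

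First I would set $\alpha=-1/k$ and $\beta=1/n$ in~\eqref{eq:adet_ab_1rho} with $N=kn$. As already observed in the proof of the theorem, $f_\lambda(-1/k)=0$ unless $\lambda_1\le k$; symmetrically (or by the transpose symmetry of content polynomials), $f_\lambda(1/n)=0$ unless $\len\lambda\le n$, since $f_{\lambda}(1/n)=\prod_{(i,j)\in\lambda}(1+(j-i)/n)$ has a zero factor coming from the cell $(n+1,1)$ whenever $\len\lambda>n$. Hence the product $f_\lambda(-1/k)f_\lambda(1/n)$ vanishes unless $\lambda_1\le k$ and $\len\lambda\le n$ simultaneously, which forces $\lambda=(k^n)$. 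Therefore
\begin{equation*}
\adet[-1/k,1/n](P(g)\1_\mu)
=\frac{\mu!}{(kn)!}f^{(k^n)}f_{(k^n)}(-1/k)f_{(k^n)}(1/n)\,\omega^{(k^n)}_\mu(g).
\end{equation*}

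Next I would solve this for $\omega^{(k^n)}_\mu(g)$ and simplify the scalar prefactor. Using~\eqref{eq:values at -1/k and 1/n}, one has $f_{(k^n)}(-1/k)f_{(k^n)}(1/n)=\frac{((kn)!)^2}{k^{kn}n^{kn}(f^{(k^n)})^2}$, so the coefficient of $\omega^{(k^n)}_\mu(g)$ becomes $\frac{\mu!}{(kn)!}\cdot\frac{((kn)!)^2}{k^{kn}n^{kn}f^{(k^n)}}=\frac{\mu!\,(kn)!}{k^{kn}n^{kn}f^{(k^n)}}$. On the other hand, $\adet[-1/kn]\1_{kn}=f_{(kn)}(-1/kn)=\prod_{i=1}^{kn-1}(1-i/(kn))=\frac{(kn)!}{(kn)^{kn}}=\frac{(kn)!}{k^{kn}n^{kn}}$ by~\eqref{eq:adet of all-one matrix}. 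Hence the coefficient equals $\frac{\mu!}{f^{(k^n)}}\adet[-1/kn]\1_{kn}$, and rearranging gives exactly~\eqref{eq:omega_for_rect_shape}. Finally, specializing $g=1$ and invoking~\eqref{eq:Kostka} (which says $\omega^{(k^n)}_\mu(1)=K_{(k^n)\mu}$) together with $\1_\mu=P(1)\1_\mu$ yields~\eqref{eq:Kostka_for_rect_shape}.

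The only genuinely substantive point is the vanishing claim $f_\lambda(1/n)=0$ for $\len\lambda>n$; everything else is bookkeeping with the identities~\eqref{eq:values at -1/k and 1/n}, \eqref{eq:adet of all-one matrix}, \eqref{eq:Kostka}, and~\eqref{eq:adet_ab_1rho} already in hand. That vanishing is immediate once one writes out the content polynomial and notes that a diagram with more than $n$ rows contains the cell in row $n+1$, column $1$, contributing the factor $1+((1-(n+1))/n)=1-n/n=0$. So the proof is essentially a one-line specialization of~\eqref{eq:adet_ab_1rho} followed by clearing constants; I would present it in that order, stating the double-vanishing observation first, then collapsing the sum, then simplifying the constant via the hook-length values and the all-one evaluation.
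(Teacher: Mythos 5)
Your proposal is correct and follows essentially the same route as the paper: specialize \eqref{eq:adet_ab_1rho} at $\alpha=-1/k$, $\beta=1/n$, use the double vanishing $f_\lambda(-1/k)=0$ unless $\lambda_1\le k$ and $f_\lambda(1/n)=0$ unless $\len\lambda\le n$ to collapse the sum to $\lambda=(k^n)$, and then simplify the constant via \eqref{eq:values at -1/k and 1/n} and $\adet[-1/kn]\1_{kn}=(kn)!/(kn)^{kn}$. Your explicit check of the vanishing at the cell $(n+1,1)$ and the bookkeeping with $g=1$ and \eqref{eq:Kostka} match the paper's argument exactly.
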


\begin{proof}
We first notice that
$\adet[-1/kn]\1_{kn}=(kn)!/(kn)^{kn}$ by \eqref{eq:adet of all-one matrix}.
Putting $N=kn$, $\alpha=-1/k$ and $\beta=1/n$ in \eqref{eq:adet_ab_1rho}, we have
\begin{equation*}
\adet[-1/k,1/n](P(g)\1_\mu)
=\frac{\mu!}{(kn)!}\sum_{\lambda\vdash kn}f^\lambda f_\lambda(-1/k)f_\lambda(1/n)\omega^\lambda_\mu(g).
\end{equation*}
Since $f_\lambda(-1/k)=0$ unless $\lambda_1\le k$ and $f_\lambda(1/n)=0$ unless $\len\lambda\le n$,
the summand in the right-hand side of the equation above vanishes unless $\lambda=(k^n)$.
Therefore we have
\begin{multline*}
\adet[-1/k,1/n](P(g)\1_\mu)
=\frac{\mu!}{(kn)!}f^{(k^n)} f_{(k^n)}(-1/k)f_{(k^n)}(1/n)\omega^{(k^n)}_\mu(g) \\
=\frac{\mu!}{f^{(k^n)}}\frac{(kn)!}{(kn)^{kn}}\omega^{(k^n)}_\mu(g)
=\frac{\mu!}{f^{(k^n)}}\adet[-1/kn](\1_{kn})\omega^{(k^n)}_\mu(g),
\end{multline*}
which implies \eqref{eq:omega_for_rect_shape}.
The equation \eqref{eq:Kostka_for_rect_shape} is readily obtained by putting $g=1$ in \eqref{eq:omega_for_rect_shape}.
\end{proof}

By putting $\mu=(1^{kn})$ in Corollary \ref{cor:values_of_omega},
we have a formula of irreducible characters for rectangular diagrams.
\begin{cor}\label{cor:values_of_chi}
It holds that
\begin{equation}\label{eq:chi_for_rect_shape}
\frac{\chi^{(k^n)}(g)}{f^{(k^n)}}=\frac{\adet[-1/k,1/n]P(g)}{\adet[-1/kn]\1_{kn}}
\end{equation}
for $g\in\sym{kn}$.
\qed
\end{cor}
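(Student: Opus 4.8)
The plan is to derive Corollary \ref{cor:values_of_chi} directly from Corollary \ref{cor:values_of_omega} by specializing the weight $\mu$. Concretely, one takes $\mu=(1^{kn})$, so that $\sym\mu$ is the trivial subgroup of $\sym{kn}$, $\mu!=1$, and by \eqref{eq:Kostka} the averaged character $\omega^{(k^n)}_{(1^{kn})}(g)=\chi^{(k^n)}(g)$ itself (there is nothing to average over). It remains only to observe that $\1_\mu=\1_{(1^{kn})}=I_{kn}$, the identity matrix, so that $P(g)\1_\mu=P(g)$. Substituting these identifications into \eqref{eq:omega_for_rect_shape} turns the left-hand side into $\chi^{(k^n)}(g)$, the factor $f^{(k^n)}/\mu!$ into $f^{(k^n)}$, and the numerator $\adet[-1/k,1/n](P(g)\1_\mu)$ into $\adet[-1/k,1/n]P(g)$. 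Dividing both sides by $f^{(k^n)}$ yields \eqref{eq:chi_for_rect_shape} exactly as stated.

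Thus the only genuine content is the bookkeeping needed to justify the two specializations: that $\1_{(1^{kn})}=I_{kn}$ follows immediately from the definition of $\1_\mu$ as a block-diagonal matrix with blocks $\1_{\mu_i}$, here all $\1_1=(1)$; and that $\omega^{(k^n)}_{(1^{kn})}=\chi^{(k^n)}$ follows from the defining formula $\omega^\lambda_\mu(x)=\frac1{\mu!}\sum_{\tau\in\sym\mu}\chi^\lambda(x\tau)$ with $\sym{(1^{kn})}=\{1\}$. Neither step involves any calculation beyond reading off definitions, so I do not expect any obstacle; the corollary is an immediate specialization and the proof is essentially one line.

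One might alternatively wish to double-check consistency with the classical case: when $k=1$, formula \eqref{eq:chi_for_rect_shape} should reduce to the statement that $\det P(g)=\sgn g$ up to the normalizing denominator, which matches \eqref{eq:signed average of adet} via Remark \ref{rem:averages_of_adet}. This is a useful sanity check rather than part of the proof. Accordingly, the write-up will simply be: set $\mu=(1^{kn})$ in Corollary \ref{cor:values_of_omega}, note $\1_{(1^{kn})}=I_{kn}$ and $\omega^{(k^n)}_{(1^{kn})}=\chi^{(k^n)}$, and conclude.

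\begin{proof}
Take $\mu=(1^{kn})$ in Corollary \ref{cor:values_of_omega}.
Then $\sym\mu$ is the trivial subgroup of $\sym{kn}$, so $\mu!=1$ and
$\omega^{(k^n)}_{(1^{kn})}(g)=\chi^{(k^n)}(g)$ by the definition of $\omega^\lambda_\mu$.
Moreover $\1_{(1^{kn})}=I_{kn}$, whence $P(g)\1_\mu=P(g)$.
Substituting these into \eqref{eq:omega_for_rect_shape} and dividing both sides by $f^{(k^n)}$ gives \eqref{eq:chi_for_rect_shape}.
\end{proof}
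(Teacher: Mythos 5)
Your proof is correct and is exactly the paper's own argument: the paper likewise obtains Corollary \ref{cor:values_of_chi} by setting $\mu=(1^{kn})$ in Corollary \ref{cor:values_of_omega}, using $\1_{(1^{kn})}=I_{kn}$, $\mu!=1$, and $\omega^{(k^n)}_{(1^{kn})}=\chi^{(k^n)}$. Nothing is missing; the bookkeeping you spell out is all that is needed.
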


\begin{rem}
Assume that $m\le N=kn$.
Let $\iota\colon\sym m\to\sym N$ be the natural inclusion.
Then we have
\begin{equation*}
\frac{\adet[\alpha,\beta]P(\iota(w))}{\adet[\alpha,\beta]I_N}
=\frac{\adet[\alpha,\beta]P(w)}{\adet[\alpha,\beta]I_m}
=\sum_{\sigma\in\sym m}\alpha^{\nu(w\sigma)}\beta^{\nu(\sigma^{-1})}.
\end{equation*}
Hence, for $w\in\sym m$, the formula \eqref{cor:values_of_chi} gives Stanley's formula \cite{S2003}
\begin{equation}\label{eq:Stanley's formula}
\frac{N!}{(N-m)!}\frac{\chi^{(k^n)}(\iota(w))}{\chi^{(k^n)}(1)}
=(-1)^m \sum_{\sigma\in\sym m}(-k)^{\kappa(w\sigma)}n^{\kappa(\sigma^{-1})},
\end{equation}
where $\kappa(\sigma)$ denotes the number of disjoint cycles in $\sigma$.
\end{rem}

We look at another particular case where $\mu=(k^n)$.
For each $\lambda\vdash N$, we put $\omega^\lambda:=\omega^\lambda_\lambda$.

\begin{cor}\label{cor:values_of_zsf}
Let $n,k$ be positive integers. For any $g\in\sym{kn}$,
\begin{equation*}
\omega^{(k^n)}(g)=\frac{\wrdet_k\bigl(P(g)\I_{n,k}\bigr)}{\wrdet_k\I_{n,k}}
\end{equation*}
holds.
\end{cor}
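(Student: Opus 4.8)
The plan is to reduce the left-hand side to a one-parameter $\alpha$-determinant and then re-run the Fourier expansion argument exactly as in the proof of Corollary~\ref{cor:values_of_omega}. Recall that $\omega^{(k^n)}=\omega^{(k^n)}_{(k^n)}$. Since $\varpi_k$ commutes with left translation and $\1_{k,1}\otimes\1_{1,k}=\1_k$, one has $\varpi_k(P(g)\I_{n,k})=P(g)\,\varpi_k(\I_{n,k})=P(g)(I_n\otimes\1_k)=P(g)\1_{(k^n)}$, so that $\wrdet_k(P(g)\I_{n,k})=\adet[-1/k](P(g)\1_{(k^n)})$; in particular $\wrdet_k\I_{n,k}=\adet[-1/k]\1_{(k^n)}=(k!/k^k)^n$. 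Thus the corollary is equivalent to the identity $\adet[-1/k](P(g)\1_{(k^n)})=(k!/k^k)^n\,\omega^{(k^n)}(g)$.

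For this I would expand $\adet[-1/k](P(g)\1_{(k^n)})$ by Lemma~\ref{lem:Fourier} exactly as \eqref{eq:adet_ab_1rho} was obtained, the only difference being that one character factor $f_\lambda(\beta)$ is now absent; using $\Imm_\lambda(P(g)\1_\mu)=\mu!\,\omega^\lambda_\mu(g)$ this gives
\[
\adet[-1/k](P(g)\1_{(k^n)})=\frac{(k!)^n}{(kn)!}\sum_{\lambda\vdash kn}f^\lambda f_\lambda(-1/k)\,\omega^\lambda_{(k^n)}(g).
\]
Now $f_\lambda(-1/k)=0$ unless $\lambda_1\le k$, while $\omega^\lambda_{(k^n)}\equiv 0$ unless $\len\lambda\le n$ (see the last paragraph); for a partition of $kn$ these two conditions together force $\lambda=(k^n)$, so only that summand survives. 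Hence $\adet[-1/k](P(g)\1_{(k^n)})=\frac{(k!)^n}{(kn)!}f^{(k^n)}f_{(k^n)}(-1/k)\,\omega^{(k^n)}(g)$, and substituting $f_{(k^n)}(-1/k)=\frac{(kn)!}{k^{kn}}\cdot\frac1{f^{(k^n)}}$ from \eqref{eq:values at -1/k and 1/n} the constant collapses to $(k!/k^k)^n=\wrdet_k\I_{n,k}$, which proves the corollary. (One could equally avoid computing the constant by specializing to $g=1$ and invoking $\omega^{(k^n)}(1)=K_{(k^n)(k^n)}=1$ from \eqref{eq:Kostka}.)

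The only step that is not completely routine --- and the one I regard as the crux --- is the vanishing $\omega^\lambda_{(k^n)}\equiv 0$ for $\len\lambda>n$, which is genuinely needed because $f_\lambda(-1/k)$ by itself does not single out $\lambda=(k^n)$. I would argue as follows: extending $\chi^\lambda$ linearly to $\C\sym{kn}$, one has $\omega^\lambda_\mu(g)=\chi^\lambda(g\,p_\mu)=\Tr\bigl(g\,p_\mu\big|_{\Smod\lambda}\bigr)$, where $p_\mu:=\frac1{\mu!}\sum_{\tau\in\sym\mu}\tau$ is idempotent; hence $p_\mu$ acts on $\Smod\lambda$ as a projection whose rank equals its trace $\Tr\bigl(p_\mu\big|_{\Smod\lambda}\bigr)=\omega^\lambda_\mu(1)=K_{\lambda\mu}$ by \eqref{eq:Kostka}. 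When $K_{\lambda\mu}=0$ this projection is the zero operator, and therefore $\omega^\lambda_\mu(g)=0$ for every $g$. It remains to note that $K_{\lambda,(k^n)}=0$ whenever $\len\lambda>n$, since a semistandard tableau of content $(k^n)$ uses only the entries $1,\dots,n$ and has strictly increasing columns; this completes the argument.
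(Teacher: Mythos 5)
Your proof is correct, but it takes a genuinely different route from the paper. The paper proves this corollary as an application of Theorem \ref{thm:main_result}: it evaluates the two-parameter quantity $\adet[-1/k,1/n]\bigl(P(g)\1_{(k^n)}\bigr)$ in two ways --- once via \eqref{eq:adet_ab_1rho}, where the simultaneous vanishing of $f_\lambda(-1/k)$ (unless $\lambda_1\le k$) and $f_\lambda(1/n)$ (unless $\len\lambda\le n$) isolates $\lambda=(k^n)$, and once via the main theorem at $\alpha=1/n$, which converts it into $f_{(k^n)}(1/n)\wrdet_k\bigl(P(g)\I_{n,k}\bigr)$ --- and then cancels $f_{(k^n)}(1/n)$. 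You instead bypass the main theorem and the second parameter entirely: using $\varpi_k\bigl(P(g)\I_{n,k}\bigr)=P(g)\1_{(k^n)}$ you write $\wrdet_k\bigl(P(g)\I_{n,k}\bigr)=\adet[-1/k]\bigl(P(g)\1_{(k^n)}\bigr)$ and expand by Lemma \ref{lem:Fourier} alone; since the factor $f_\lambda(1/n)$ is then unavailable, you correctly identify and supply the missing ingredient, namely that $\omega^\lambda_\mu\equiv 0$ whenever $K_{\lambda\mu}=0$, proved by observing that the idempotent $p_\mu=\frac1{\mu!}\sum_{\tau\in\sym\mu}\tau$ acts on $\Smod\lambda$ as a projection of rank $\Tr\bigl(p_\mu\big|_{\Smod\lambda}\bigr)=K_{\lambda\mu}$, together with $K_{\lambda,(k^n)}=0$ for $\len\lambda>n$; your constant bookkeeping via \eqref{eq:values at -1/k and 1/n} is also right. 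The paper's route buys economy (no representation-theoretic input beyond what was already used to derive \eqref{eq:adet_ab_1rho}) and exhibits the statement as a genuine corollary of Theorem \ref{thm:main_result}; your route buys independence from the main theorem and isolates a vanishing lemma for $\omega^\lambda_\mu$ of some independent interest, at the cost of a small extra dose of representation theory.
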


\begin{proof}
As we see in the proof of Corollary \ref{cor:values_of_omega}, we have
\begin{equation*}
\adet[-1/k,1/n](P(g)\1_{(k^n)})
=\frac{(k!)^n}{(kn)!}f_{(k^n)}(1/n)\omega^{(k^n)}(g)
=\omega^{(k^n)}(g)f_{(k^n)}(1/n)\wrdet_k\I_{n,k}.
\end{equation*}
On the other hand, by Theorem \ref{thm:main_result}, we have
\begin{equation*}
\adet[-1/k,1/n](P(g)\1_{(k^n)})
=\adet[-1/k,1/n]\Bigl(\varpi_k\bigl(P(g)\I_{n,k}\bigr)\Bigr)
=f_{(k^n)}(1/n)\wrdet_k\bigl(P(g)\I_{n,k}\bigr).
\end{equation*}
Combining these two, we obtain the desired conclusion.
\end{proof}

\begin{rem}
By Corollary \ref{cor:values_of_zsf} and \eqref{eq:phi is equal to the coefficient}, we have
\begin{equation*}
\omega^{(k^n)}(\sigma)=
\frac{\text{coefficient of $x^{M(\sigma)}$ in }(\det X)^k}
{\gind{\sym k^n}{\sym k^n\cap\sigma^{-1}\sym k^n\sigma}}
\end{equation*}
for $\sigma\in\sym{kn}$.
\end{rem}

\begin{flushleft}
\bigskip

Kazufumi Kimoto \par
Department of Mathematical Sciences, \par
University of the Ryukyus \par
1 Senbaru, Nishihara, Okinawa 903-0213 JAPAN \par
\texttt{kimoto@math.u-ryukyu.ac.jp}
\end{flushleft}

\end{document}